\newcommand{\Q}{{\mathbb Q}}
\newcommand{\Z}{{\mathbb Z}}
\newcommand{\R}{{\mathbb R}}
\renewcommand{\P}{{\mathbb P}}
\newcommand{\E}{{\mathbb E}}
\newcommand{\df}{\textbf}
\newcommand{\leb}{\mathcal{L}}
\newcommand{\union}{\cup}
\newcommand{\Union}{\bigcup}
\newcommand{\intersect}{\cap}
\newtheorem{thm}{Theorem}
\newtheorem{lemma}[thm]{Lemma}
\newcounter{mycount}
\title{Poisson allocations with bounded connected cells}
\author{Alexander E. Holroyd}
\address{Alexander E. Holroyd, Microsoft Research, 1 Microsoft Way, Redmond, WA 98052, USA}
\email{holroyd at microsoft.com}
\author{James B. Martin}
\address{James B. Martin, Department of Statistics,
1 South Parks Rd, Oxford OX1 3TG, UK}
\email{martin at stats.ox.ac.uk}
\keywords{Poisson process, allocation}
\subjclass[2010]{60D05; 60G55; 60G10}
\date{8 October 2014}
\begin{document}

\begin{abstract}\sloppypar
Given a homogenous Poisson point process in the plane, we prove that it is
possible to partition the plane into bounded connected cells of equal volume,
in a translation-invariant way, with each point of the process contained in
exactly one cell.  Moreover, the diameter $D$ of the cell containing the
origin satisfies the essentially optimal tail bound $\P(D>r)<c/r$.  We give
two variants of the construction.  The first has the curious property that
any two cells are at positive distance from each other.  In the second, any
bounded region of the plane intersects only finitely many cells almost
surely.
\end{abstract}

\maketitle

\section{Introduction}
Let $\Pi$ be a simple point process on $\R^d$. Its \df{support} is the random
set of points $[\Pi]:=\{x\in\R^d:\Pi(\{x\})=1\}$. Let $\leb$ denote Lebesgue
measure or volume on $\R^d$.  An \df{allocation} of $\Pi$ (to $\R^2$) is a
random measurable map $\Phi:\R^d\to\R^d \cup\{\infty\}$ such that almost
surely $\Phi(x)\in[\Pi]$ for $\leb$-almost every $x\in\R^d$, and $\Phi(x)=x$
for all $x\in[\Pi]$.  For a point $x\in[\Pi]$, the set $\Phi^{-1}(x)$ is
called the \df{cell} of $x$.  (The reason for allowing a null set to be
mapped to $\infty$ is to avoid uninteresting complications concerning
boundaries of cells.) An allocation $\Phi$ is \df{translation-invariant} if
for every $y\in\R^d$, the map $x\mapsto \Phi(x-y)+y$ has the same law as
$\Phi$.

Of particular interest are translation-invariant \df{fair}
allocations, in which all cells have equal volume.  Such
allocations were introduced in \cite{hp} as a tool in the
construction of shift-couplings of Palm processes. Several
specific choices of allocation have been studied in depth
\cite{grav,grav2,hhp,hhp2,hpps,krikun,timar-marko,nazarov-sodin-volberg}.
A particular focus is on bounding the diameter of a typical
cell, for allocations to a homogenous Poisson point
process.

In the plane $\R^2$, it is natural to ask whether all cells
can be connected sets.  (This is clearly impossible in
$\R$, while in $\R^d$ for $d\geq 3$ it is straightforward
to modify any allocation to make the cells connected).
Krikun \cite{krikun} constructed the first
translation-invariant fair allocation of a Poisson process
to $\R^2$ with connected cells (answering a question in
\cite{hp}), but was unable to determine whether its cells
are bounded.  Here we construct an allocation whose cells
are both connected and bounded, answering a question posed
by Scott Sheffield and Yuval Peres (personal
communications).

\begin{thm}\label{alloc}
Let $\Pi$ be a homogeneous Poisson point process of
intensity $1$ on $\R^2$. There exists a
translation-invariant allocation of $\Pi$ in which almost
surely each cell is a bounded, connected set of area $1$
that contains the allocated point. Moreover, the diameter
$D$ of the cell containing the origin satisfies
$\P(D>t)<c/t$ for some $c$ and all $t>0$, and in addition
we may choose either one of the following properties:
\begin{itemize}
\item[(a)] any two cells are at non-zero distance from each other; or
\item[(b)] any bounded set in $\R^2$ intersects only finitely many cells.
\end{itemize}
\end{thm}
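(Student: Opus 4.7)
My plan is a multiscale hierarchical construction over dyadic squares. Writing $L_n = 2^n$, consider for each $n \geq 0$ the tiling of $\R^2$ by axis-aligned dyadic squares of side $L_n$. The central tension is that the target $c/t$ tail is only just non-integrable, so the construction cannot be driven by fast Poisson concentration in individual boxes; instead the slow two-dimensional discrepancy between $\Pi$ and Lebesgue measure should dictate the tail behaviour.

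I would call a dyadic square $Q$ at scale $n$ \emph{balanced} if its Poisson count $\Pi(Q)$ lies within, say, $L_n^{3/2}$ of its area $L_n^2$, and \emph{good at scale $n$} if $Q$ together with an $O(L_n)$-thick neighborhood is balanced. Within a contiguous good block at scale $n$ I would construct connected area-$1$ cells by sweeping the block with a deterministic curve adapted to the dyadic structure (for instance, a Hilbert-style space-filling curve), cutting each time the accumulated Poisson mass reaches an integer. This produces strip-like cells of diameter $O(L_n)$; small residual count-versus-area mismatches at block boundaries would be patched by transferring thin connected slivers between adjacent strips, preserving connectivity. Globally, the allocation is built inductively from small scales upwards: each Poisson point $x$ is allocated at the smallest scale $n(x)$ at which the good block containing $x$ is large enough to host its eventual cell, yielding a cell of diameter $O(L_{n(x)})$.

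The main obstacle will be proving the multiscale estimate $\P(n(\mathbf{0}) \geq n) = O(1/L_n)$, which is precisely what yields $\P(D > t) < c/t$. Unlike the exponential concentration bound for a single box, this is a more delicate estimate driven by the cumulative Poisson discrepancy in nested neighborhoods of the origin, and a careful coupling between scales will be needed to prevent ``leftover'' mass at one scale from forcing large cells at the next. Once the main allocation is in place, variants~(a) and~(b) should follow from minor modifications to the treatment of boundaries: for~(a), take cells to be open and separated by thin deterministic boundary curves, so that any two cells lie at positive distance; for~(b), let adjacent cells share their boundaries but arrange the sweeping curves to be locally rectifiable, ensuring that only finitely many cells meet any bounded set almost surely.
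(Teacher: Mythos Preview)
Your plan has several genuine gaps. First, the treatment of property~(a) is simply wrong: if two open cells are separated only by a ``thin deterministic boundary curve,'' they are at distance \emph{zero}, not positive distance. Positive distance forces a set of positive area between any two cells, which must itself be filled by later-scale cells; this is precisely why (a) and (b) are mutually exclusive, and it cannot be obtained by a cosmetic change of boundaries. Second, the Hilbert-curve mechanism hides the main difficulty rather than solving it. Cutting the curve ``each time the accumulated Poisson mass reaches an integer'' gives one point per cell but random area; cutting by Lebesgue measure gives area-$1$ cells with a random number of points. Reconciling the two is not a matter of ``thin slivers at block boundaries'': in a block of area $L_n^2$ the count--area discrepancy is of order $L_n$, and producing a matching of points to disjoint connected area-$1$ pieces \emph{each containing its own point} is the whole problem (local clusters of points obstruct the obvious greedy schemes along the curve). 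Your ``balanced'' threshold $L_n^{3/2}$ is so loose that essentially every box is balanced, so $n(\mathbf 0)$ as you define it carries no information about whether the origin has actually been allocated. Third, translation invariance is not addressed; a fixed dyadic grid is at best $\Z^2$-invariant.

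The paper takes a different and cleaner route. The key device is a deterministic carving lemma for \emph{simple rational polyominos}: inside a simple polyomino $A$, given a seed $B$ and obstacles $D_1,\dots,D_r$, for any rational $\rho$ with $\leb B<\rho<\leb(A\setminus\bigcup_i D_i)$ one can find a simple polyomino $C$ of area exactly $\rho$ with $B\subset C\subset A^\circ\setminus\bigcup_i D_i$. Using a randomly shifted hierarchy of $k$-blocks, cells are carved out one at a time within each block as closed, pairwise disjoint simple polyominos of area $1$; disjoint closed polyominos are automatically at positive distance, so (a) is immediate. The tail bound is then not delicate at all: the unallocated area in a $k$-block of area $4^k$ is at most $1+(4^k-\Pi)^+$, whose expectation is $O(2^k)$ by Poisson CLT scaling, giving $\P(D>2^k\sqrt 2)\le C\,2^k/4^k$ directly---no coupling between scales is needed. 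Property~(b) genuinely requires extra work: one introduces inner blocks and shows that infinitely often a block is ``good'' (its count lies strictly between the areas of two nested regions), in which case that block can be allocated \emph{completely}, forcing local finiteness; this is where a Borel--Cantelli argument with controlled dependence between scales actually enters.
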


It is easily seen that no allocation can satisfy both (a)
and (b): (a) implies that the line segment joining any two
points of $[\Pi]$ intersects infinitely many cells, in
contradiction to (b).  In the above, the \df{diameter} of a
set $A\subseteq\R^2$ is $\sup_{x,y\in A} \|x-y\|$, where
$\|\cdot\|$ denotes the Euclidean norm.  The power $-1$ of
$t$ in the tail bound cannot be improved: any
translation-invariant fair allocation of a homogenous
Poisson process satisfies $\E D = \infty$; see \cite{hp}.

In contrast with the allocations considered in
\cite{grav,hhp,krikun}, those that we provide are not
especially canonical.  Rather, the point is that, armed
with appropriate tools, it is not difficult to construct
allocations with a variety of desirable properties.  The
two parts (a) and (b) will use similar constructions, with
the first being slightly simpler.  Our allocations are not
deterministic functions of the point process $\Pi$, but
require additional randomness. See e.g.\ \cite{hpps} for
more on this distinction (especially in the context of
matchings). It remains an open question to prove the
existence of a translation-invariant fair allocation with
bounded connected cells in $\R^2$ that is a deterministic
function of the Poisson process.  It is plausible this
could be done by combining our methods with deterministic
hierarchical partitioning techniques as in e.g.\
\cite{hp-tree-match,soo,timar}.

\section{Rational polyominos}

We will construct the cells of the allocations iteratively. To do so, we want
the previously constructed cells to be well-behaved subsets of the plane,
while still allowing sufficient flexibility in the construction of new cells.
The following definition strikes the appropriate balance.

A \df{rational polyomino} is a union of finitely many closed rational
rectangles of the form
\[ [a,b]\times[c,d] \subset \R^2,\quad a,b,c,d\in\Q. \]
By taking the least common denominator, a rational polyomino can also be
expressed as a union of squares
\begin{equation}
\frac{1}{m}\bigcup_{z\in S} \bigl(z+[0,1]^2\bigr),
\label{squares}\end{equation} for some positive integer $m$ and some finite
$S\subset\Z^2$. We write $A^o$ for the topological interior of a set
$A\subseteq \R^2$, and $\overline{A}$ for the closure.
 We call a rational polyomino \df{simple} if its
interior and its complement are both connected, or
equivalently if both the set $S$ and its complement
$\Z^2\setminus S$ induce connected subgraphs of the
nearest-neighbour lattice $\Z^2$ (the graph in which
vertices $x,y\in\Z^2$ are joined by an edge whenever
$\|x-y\|_1=1$).

The next lemma says that we can find a simple rational polynomial of any
suitable area that contains one given set but avoids others.
See Figure \ref{lemmafig} for an illustration.

\begin{lemma}\label{poly}\sloppypar
Let $A$ be a simple rational polyomino, and let $B$ and
$D_1,\ldots,D_r$ be pairwise disjoint subsets of $A^\circ$,
each of which is either a simple rational polyomino or a
singleton. Then, for any rational $\rho$ with $\leb B <\rho
< \leb(A\setminus \bigcup_i D_i)$, there exists a simple
rational polyomino $C$ with $\leb C=\rho$ and
$$B\subset C \subset A^\circ\setminus \textstyle\bigcup_i
D_i.$$
\end{lemma}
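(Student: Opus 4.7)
The plan is to refine the underlying grid, pre-cut thin corridors linking each obstacle $D_i$ out to $\partial A$ so that the remaining region becomes simply connected, and then grow $C$ from $B$ one small square at a time inside this region. Choose a positive integer $m$ large enough that (i) $A$ and each polyomino among $B,D_1,\ldots,D_r$ is a union of closed squares of side $1/m$ on $\frac{1}{m}\Z^2$; (ii) each singleton among the $B, D_i$ lies strictly inside a single $1/m$-square; (iii) these polyominoes and enclosing singleton-squares are pairwise disjoint and separated from $\partial A$ by at least one layer of $1/m$-squares; (iv) $\rho$ is a positive integer multiple of $1/m^2$; and (v) $\leb(A\setminus\bigcup_iD_i)-\rho$ exceeds several multiples of $1/m^2$. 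Write $\mathcal A, \mathcal B, \mathcal D_i$ for the resulting collections of $1/m$-squares.

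Build a subset $\mathcal T \subset \mathcal A\setminus(\mathcal B \cup \bigcup_i \mathcal D_i)$ that is a tree in the grid adjacency graph, meets $\partial A$ in exactly one square, and is adjacent to each $\mathcal D_i$. Do this inductively, adjoining one corridor at a time: at step $k$, adjoin a path of $1/m$-squares connecting the existing tree (or $\partial A$, if $k=1$) to a new $\mathcal D_k$, while avoiding $\mathcal B$, the other $\mathcal D_i$, and the corridors already placed. Such a path exists because the current uncut region remains connected, and refining $m$ further if necessary ensures enough room to route the corridor without crossings. Define $\tilde A := A\setminus(\mathcal T \cup \bigcup_i \mathcal D_i)$ (as the union of the remaining $1/m$-squares); then $\tilde A$ is a simple rational polyomino containing $\mathcal B$, with $\leb\tilde A>\rho$ provided the corridors are thin enough, which they are by (v).

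Starting from $C_0 := \mathcal B$ (or from the single enclosing $1/m$-square, if $B$ is a singleton), add $1/m$-squares from $\tilde A \setminus C_0$ one at a time, forming an increasing sequence $C_0\subset C_1\subset\cdots$ of simple rational polyominoes, stopping at the unique step $k$ with $\leb C_k=\rho$. The existence of such a sequence reaching any desired intermediate state rests on the polyomino shelling fact: for any simple polyomino $P$ with a simple sub-polyomino $Q$, the squares of $P\setminus Q$ admit an ordering $s_1, s_2, \ldots$ such that $Q\cup\{s_1,\dots,s_k\}$ is simple for every $k$. This is proved by induction on $|P\setminus Q|$ by repeatedly removing a convex corner square of $P$ that lies outside $Q$.

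The main technical content lies in verifying the corridor construction: at each step we must check that the current uncut region remains connected (so the next corridor can be routed) while the number of bounded complement components strictly decreases (so that after $r$ corridors, $\tilde A$ is simply connected). This follows from the topological fact that slitting a connected planar region from an interior hole out to the exterior along a simple arc preserves connectedness and eliminates exactly one hole; in the discrete grid setting the arc is replaced by a path of $1/m$-squares, and enough slack to represent such paths is arranged by refining $m$.
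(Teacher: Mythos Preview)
Your approach is genuinely different from the paper's, and the corridor-and-shelling strategy can be made to work, but the proof of the key ``shelling fact'' has a real gap.  You claim that one can always remove a \emph{convex corner} of $P$ lying outside $Q$.  This fails: take $P$ to be the $3\times 3$ square and $Q$ the seven-square C-shape
\[
\begin{array}{ccc}
\blacksquare&\blacksquare&\blacksquare\\
\blacksquare&\cdot&\cdot\\
\blacksquare&\blacksquare&\blacksquare
\end{array}
\]
Then $Q$ is simple, $P\setminus Q$ consists of the two middle-row squares, and \emph{all four} convex corners of $P$ lie in $Q$.  The shelling lemma is still true here --- the rightmost square of $P\setminus Q$ is removable since it borders the exterior and is not a cut vertex --- but the argument you sketched does not find it.  A correct proof needs a different selection rule (for instance: show that some boundary square of $P$ must lie outside $Q$, since otherwise $Q$ would enclose $P\setminus Q$ and fail to be simply connected, and then argue that among such boundary squares at least one is not a cut vertex).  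You should also ensure $C\subset A^\circ$; as written, the growth inside $\tilde A$ could reach $\partial A$.

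By contrast, the paper sidesteps both the corridor routing and the shelling lemma entirely.  It takes a spanning tree of the grid squares in $\overline{A\setminus B\setminus\bigcup_i D_i}$ rooted at a square of $B$, places a small $s\times t$ rectangle in each square, joins them along tree edges, and adjoins $B$.  The result is automatically a simple polyomino for any $s,t\in(0,1)$ because it is a tree of rectangles hanging off $B$; the area is an explicit bilinear function of $(s,t)$ ranging from $\leb B$ to $\leb(A\setminus\bigcup_i D_i)$, and one solves for rational $(s,t)$ giving area exactly $\rho$.  This buys exact area control with two continuous parameters rather than a discrete square count, and avoids any inductive topological lemma.  Your approach is more elementary in spirit but carries more combinatorial overhead that must be discharged carefully.
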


\begin{center}
\begin{figure}[htbp]
\resizebox{\textwidth}{!}{\input{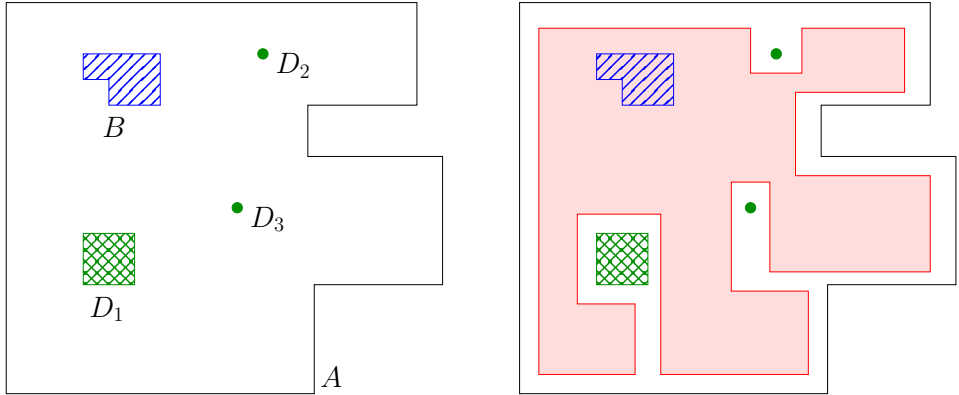}}
 \caption{An illustration of Lemma \ref{poly}. On the left,
a
simple rational polyomino $A$, containing in its interior
simple rational polyominos $B$ and $D_1$, and singletons $D_2$ and $D_3$.
On the right, a simple rational polyomino $C$ (shaded)
within the interior of $A$ that
contains $B$ and avoids $D_1$, $D_2$, $D_3$.
 }
 \label{lemmafig}
\end{figure}
\end{center}

\begin{proof}
We first observe that any singletons among the given sets may be replaced
with simple rational polyominos. Let $k$ be a positive integer, and, for each
singleton set $D_i=\{x_i\}$, let $D'_i$ be the union of all squares of the
form $k^{-1} ([0,1]^2+z)$ for $z\in\Z^2$ that contain the point $x_i$ (at
most $4$ of them).  For non-singleton sets $D_j$ let $D_j'=D_j$.  Similarly
define $B'$ in terms of $B$.  For $k$ sufficiently large, $D'_1,\dots,D'_r$
and $B'$ are pairwise disjoint subsets of $A^\circ$, and $\leb B' <\rho <
\leb(A\setminus \bigcup_i D'_i)$. Therefore, it suffices to prove the lemma
in the case when there are no singletons.

There exists an integer $m$ such that each of the
polyominos $A$, $B$, and $D_1,\dots, D_r$ can be expressed
as a union of squares of side $1/m$ as in \eqref{squares}.
Thus, let $K, L\subset \Z^2$ be such that
\begin{equation}
\label{Krep}
\overline{A\setminus {\textstyle\bigcup_i} D_i \setminus B}
=
\frac{1}{m}\bigcup_{z\in K} \bigl(z+[0,1]^2\bigr);
\qquad
B
=
\frac1m\bigcup_{z\in L}\bigl(z+[0,1]^2\bigr).
\end{equation}
Note that $K$ and $L$ are disjoint. Both $L$ and its
complement are connected (as subsets of $\Z^2$), while $K$
is connected but its complement need not be.

We now further subdivide the squares in (\ref{Krep}). Given
rational $s,t\in(0,1)$ and $z\in\Z^2$, consider the
rectangle of area $st$ within $z+[0,1]^2$ given by
\[
Q_z^{s,t}:=z+\bigl[\tfrac12-\tfrac{s}{2},\tfrac12+\tfrac{s}{2}\bigr]
\times
\bigl[\tfrac12-\tfrac{t}{2},\tfrac12+\tfrac{t}{2}\bigr].
\]

Let $w$ be an element of $L$ that is adjacent in $\Z^2$ to
some element of $K$.  This is possible because $K\cup L$
corresponds to $\overline{A\setminus \bigcup_i D_i}$ and is
therefore connected.
Now take a spanning tree of the set $K\union\{w\}$ in $\Z^2$. Consider the
set that comprises the rectangle $Q_u^{s,t}$ for each $u\in K$, together with
the rectangle that is the convex hull of $Q_u^{s,t}\union Q_v^{s,t}$ for each
edge $(u,v)$ of the spanning tree.  Take the union of this set with $B$, and
call it $C^{s,t}$.

The set $m^{-1} C^{s,t}$ is a simple rational polyomino that contains $B$ and
is contained in $A^\circ\setminus\Union D_i$. To complete the proof, we will
show that we can choose rational $s,t\in(0,1)$ so that $\leb (m^{-1}
C^{s,t})=\rho$, which is to say $\leb C^{s,t}=m^2\rho$.

Note that $\leb C^{s,t}$ can be expressed as the sum of the
following terms: $\leb B$, plus $st$ for each element of
$K$, plus $s(1-t)/2$ for each horizontal edge of the tree
that is incident to $w$, and $s(1-t)$ for each other
horizontal edge of the tree, plus similarly $(1-s)t/2$ or
$(1-s)t$ for each vertical edge. Therefore,
\begin{equation}\label{Csteq}
\leb C^{s,t}=\alpha st + \beta s + \gamma t + \delta
\end{equation}
for some rational $\alpha,\beta,\gamma,\delta$ that do not
depend on $s,t$.  Moreover, since the number of edges of
the tree equals the number of elements of $K$, and at least
one edge is incident to $w$, we have $\alpha, \delta>0$ and
$\beta, \gamma\geq 0$.

The expression in \eqref{Csteq} is continuous and strictly increasing in both
$s$ and $t$.  As $(s,t)\to (0,0)$ we have $m^{-2}\leb C^{s,t}\to \leb
B<\rho$, while as $(s,t)\to(1,1)$ we have $m^{-2}\leb C^{s,t}\to
\leb(A\setminus\Union D_i)
>\rho$.  Hence, writing $s_0=\sup\{s: m^{-2}\leb C^{s,0}<\rho\}$ and
$s_1=\inf\{s:m^{-2}\leb C^{s,1}>\rho\}$, we have $s_0<s_1$.
Fix a rational $s\in(s_0,s_1)$; we have $s\in(0,1)$ and
$m^{-2}\leb C^{s,0}<\rho<m^{-2}\leb C^{s,1}$.  Thus there
exists $t\in(0,1)$ with $m^{-2}\leb C^{s,t}=\rho$; by
\eqref{Csteq}, this $t$ must be rational.
\end{proof}

\section{Non-touching allocation}
\begin{proof}[Proof of Theorem \ref{alloc}(a)]
We first construct an allocation that is invariant under
all translations by elements of $\Z^2$ and whose cells have
the claimed properties; we will obtain a fully
translation-invariant version by translating both the
allocation and the point process by a uniformly random
element of $[0,1)^2$.

The cells of our allocation will be simple rational
polyominos.  We first define a sequence of successively
coarser partitions of $\R^2$ into squares in a
$\Z^2$-invariant way.  (This construction is standard; see
e.g.\ \cite{hpps}). Let $(\alpha_i)_{i=0,1,\ldots}$ be
i.i.d.\ uniformly random elements of the discrete cube
$\{0,1\}^2$, independent of $\Pi$. Given the sequence
$(\alpha_i)$, define a $k$-\df{block} for $k\geq 0$ to be
any set of the form $[0,2^k)^2+z2^k+\sum_{i=0}^{k-1}
\alpha_i 2^i$, for $z\in\Z^2$. (So a $(k+1)$-block is the
disjoint union of four $k$-blocks, and every $k$-block has
area $4^k$.)

We now construct an allocation in a sequence of steps
$k=1,2,\ldots$. At step $k$ we will construct some cells,
each of which will be confined within the interior of some
$k$-block. For step $1$ we proceed as follows.  For each
$1$-block $R$, let $x_1,x_2,\ldots, x_s$ be the points of
$[\Pi]\cap R$, enumerated lexicographically, say. If $s\geq
1$, let $C_1$ be a rational polyomino of area $1$ that
satisfies $x_1\in C_1\subset R^\circ$ and that avoids the
other points $x_2,\ldots,x_s$; this exists by
Lemma~\ref{poly}, with $A=\overline{R}$.  Declare $C_1$ be
the cell allocated to the point $x_1$.  Now if $s\geq 2$,
similarly find a rational polyomino $C_2$ of area $1$ in
$R$ that contains $x_2$ and avoids $C_1$ and
$x_3,\ldots,x_s$, and allocate it to $x_2$. Similarly if
$s\geq 3$, allocate to $x_3$ a cell avoiding $C_1\cup C_2$
and $x_4,\ldots,x_s$.  In each case, this is possible by
Lemma~\ref{poly}, because the total area required for
$C_1,C_2,C_3$ is $3$, which is strictly less than $\leb
R=4$.

For step $k$ we proceed as follows.  Let $R$ be a
$k$-block, and enumerate the unallocated points of
$[\Pi]\cap R$ lexicographically.  For each in turn, use
Lemma~\ref{poly} to choose a rational polyomino of area $1$
in $R^\circ$ that contains the point, and avoids all other
points of $[\Pi]\cap R$ and all previously chosen cells
that intersect $R$ (all such cells are in fact subsets of
$R$). Continue until either we run out of unallocated
points in $R$, or the total area of all the cells in $R$
reaches $\leb R-1$.  Do this for each $k$-block.

After all steps have been completed as above, define an
allocation $\Psi$ by setting $\Psi(y)=x$ if $y$ is in the
cell assigned to $x\in[\Pi]$, and $\Psi(y)=\infty$ for all
other $y\in\R^2$. It is clear that each cell of $\Psi$ is
either empty or a simple rational polyomino of area $1$
that contains the corresponding point of $\Pi$.  It is also
clear that $\Psi$ has the required $\Z^2$-invariance
property provided the cells are chosen according to fixed
translation-invariant rules; this is possible since all the
steps in the proof of Lemma~\ref{poly} can be carried out
in a translation-invariant way.  Every cell is a closed
set, and hence any two non-empty cells are at positive
distance from each other, since they do not intersect.

Now let $U$ be a uniformly random element of the unit
square $[0,1)^2$, independent of $(\Pi,\Psi)$, and define a
translated allocation $\Psi'$ by $\Psi(x):=U+\Psi(x-U)$.
Then $\Psi'$ is a fully translation-invariant allocation of
the translated point process $\Pi'$ defined by
$\Pi'(A):=\Pi(A+U)$ (which is a Poisson process). It
remains to show that every point of the process is
allocated a non-empty cell, and that almost every
$x\in\R^2$ is allocated to some cell, and that the claimed
diameter bound holds.

Let $D$ be the diameter of the cell of $\Psi'$ containing
the origin $0$, if it exists, and let $D=\infty$ if
$\Psi'(0)=\infty$. Then $D$ has the same law as the
diameter of the cell of $\Psi$ containing a uniformly
random point $U$ in $[0,1)^2$.  Note that any cell that is
constructed at step $k$ or earlier lies entirely within
some $k$-block, and therefore has diameter at most
$2^k\surd 2$.  By $\Z^2$-invariance, the probability that
$U$ is allocated by step $k$ equals the expected proportion
of the $k$-block containing $[0,1)^2$ that is allocated by
step $k$. Since the positions of blocks are independent of
$\Pi$, this expected proportion remains the same if we
condition the $k$-block to have a specific position, say
$S:=[0,2^k)^2$. The total area allocated within $S$ by step
$k$ is precisely $\min\{\Pi(S),4^k-1\}$ (since new cells
are added while there are unallocated points until their
total area is one less than the area $4^k$ of $S$). Thus
for all integers $k\geq 1$,
\begin{align*}
\P\bigl(D>2^k\surd 2\bigr)
&\leq
 1- 4^{-k}\,\E\min\bigl\{\Pi(S),4^k-1\bigr\} \\
&\leq 4^{-k}\,\Bigl(1+\E \bigl[(4^k-\Pi(S))^+\bigr]\Bigr)
\end{align*}
Since $\Pi(S)$ is Poisson distributed with mean $4^k$, we have $\E
[(4^k-\Pi(S))^+]\leq C\sqrt{4^k}$ for some $C$, and it follows that
$\P(D>t)<c/t$ as claimed.

In particular the above implies that $D<\infty$ almost
surely, and so almost every $x\in\R^2$ is assigned to some
cell by $\Psi'$.  Since each cell has area $1$, a standard
mass-transport argument (see e.g.\ \cite{hhp,hpps}) then
implies that the process of those points of $\Pi'$ that are
allocated cells has intensity $1$.  Since $\Pi'$ has
intensity $1$, this shows that almost surely every point of
$\Pi'$ is allocated.
\end{proof}

\section{Locally finite allocation}
\begin{proof}[Proof of Theorem \ref{alloc}(b)]
As at the beginning of the proof of part (a), we define a
hierarchy of $k$-blocks using an i.i.d.\ sequence
$(\alpha_i)$.  As in the previous proof, it suffices to
construct an appropriate allocation that is invariant under
$\Z^2$, and then apply a random translation.

For each block we define an inner block. Let
$(\eta_k)_{k\geq 1}$ be a strictly decreasing sequence of
rational numbers in $(\tfrac12,1)$ with $\eta_k \downarrow
\tfrac12$ as $k\to\infty$. If $B_k$ is a $k$-block then
$B_k=(a,b)+[0,2^k)^2$ for some point $(a,b)\in\Z^2$. Define
its \df{inner block} $I_k$ by $I_k=(a,b)+[\eta_k,
2^k-\eta_k]^2$. Thus $I_k$ is a square of side
$2^k-2\eta_k$ with the same centre as $B_k$.  Define also
$M_k=B_k\intersect I_{k+1}$, where $I_{k+1}$ is the inner
block of the $(k+1)$-block containing $B_k$. Thus $M_k$ is
a square of side $2^k-\eta_{k+1}$, which contains $I_k$ in
its interior (since the sequence $\eta_k$ is strictly
decreasing).

As in the previous proof, we construct the allocation in a
sequence of steps $k=1,2,\dots$. At step $k$ we add some
cells to the allocation, with each such cell confined to
the interior of some $k$-block.

At step $k$ we treat each $k$-block separately. Let $B_k$
be a $k$-block. The following statement plays the role of
induction hypothesis: at the beginning of step $k$, the
closure of the union of the previously allocated cells in
$B_k$ is a union of disjoint simple rational polyominos
contained in the interior of $I_k$. In particular, the
complement with respect to $I_k$ of this set is connected.

The allocations during step $k$ will be carried out in such a way that
at the end of step $k$, the following holds:
the closure of the union of the allocated cells in $B_k$ forms a collection
of disjoint simple rational polyominos contained in $M_k^o$.
This property implies the induction hypothesis for next level $k+1$;
for if $B_{k+1}$ is the $(k+1)$-block containing $B_k$,
then its inner block $I_{k+1}$ is made up of the square $M_k$
together with three other analogous squares; the interiors of these squares are disjoint.

Now we explain how the allocation within $B_k$ at step $k$ is carried out.
We say that the box $B_k$ is \textbf{good} if
\begin{equation}\label{good}
\leb I_k < \Pi(I_k) < \leb M_k.
\end{equation}
We proceed in two different ways depending on whether or not $B_k$ is good.

If $B_k$ is not good, we allocate only within the inner box
$I_k$. (By the induction hypothesis, all previously
allocated cells in $B_k$ are in the interior of $I_k$). In
this case we proceed in the same fashion as we did in the
construction of the non-touching allocation in part (a).
Using Lemma~\ref{poly}, we add new cells in the interior of
$I_k$ one by one, each one a simple rational polyomino
disjoint from previous cells. We continue until either the
number of cells is $\Pi(I_k)$ or the remaining unallocated
area inside $I_k$ is at most 1.

If $B_k$ is good, we start by finding a region lying
between $I_k$ and $M_k$ which contains a number of points
of $\Pi$ exactly equal to its area. Because (\ref{good})
holds, this can be done using Lemma~\ref{poly}, setting
$\rho=\Pi(I_k)$, $B=I_k$ and $A=M_k$, and setting
$D_1,\ldots,D_r$ to be the points of $\Pi$ in $M_k\setminus
I_k$ (and $r=0$). In this way we find a simple rational
polyomino $C$ with $I_k\subset C \subset M_k^o$ and
$\Pi(C)=\leb(C)$.

Now we will divide up the set $C$ to form the cells
allocated to points of $\Pi$ in $C$. Some such allocations
may already have been done. All the remaining ones except
the last can be done one by one, just as before, using
Lemma~\ref{poly}. These new allocations are simple random
polyominos, disjoint from previous cells, containing
precisely one point of $\Pi$ and contained in the interior
of $C$; in particular the remainder of $C$ stays connected.
Finally, when one point of $\Pi$ remains in $C$, and hence
when area 1 remains to be allocated, we allocate the rest
of $C$ as the cell of the last point. The closure of this
cell is a rational polyomino, and is connected but not
simple.

At the end of the procedure, as in part (a) define $\Phi$
by setting $\Phi(y)=x$ whenever $y$ is in the cell assigned
to $x\in[\Pi]$, and $\Phi(y)=\infty$ otherwise. Each such
cell is either empty or has area 1 and contains the
corresponding point of $\Pi$. As before, by carrying out
the steps of Lemma~\ref{poly} in a translation-invariant
way, we can ensure that $\Phi$ has the required
$\Z^2$-invariance property.

If $B_k$ is a good box, then the number of cells that
intersect $I_k$ is finite. Also, every point in $I_k$ is
allocated to some cell. To show that every point in $\R^2$
is allocated to some cell and that the allocation is
locally finite as desired, it will be enough to show that
with probability 1, every point is in the interior of the
inner box of some good box.

Let $S$ be any $1\times1$ square. The probability that $B$
is contained in the interior of the inner box of some
$k$-block is $(2^k-2\eta_k-1)^2/4^k$, which tends to 1 as
$k\to\infty$. If this event holds for some $k$, then in
fact it holds for all $k'>k$ also (since the inner box of a
$k$-block lies within the inner box of the containing
$(k+1)$-block). Hence with probability 1, this event holds
for all large enough $k$, say $k\geq k_0$.

Now let $B_k$ be the $k$-block containing $S$, with $I_k$
and $M_k$ defined as before. It will be enough to show that
with probability 1, $B_k$ is good for infinitely many $k$.
From (2), $\P(B_k \text{ is good})=\P(\leb I_k < \Pi(I_k) <
\leb M_k)$. We have $\leb I_k = (2^k-2\eta_k)^2$, and
$\leb M_k = (2^k-\eta_{k+1})^2$.  Therefore, since
$\eta_k>\eta_{k+1}>\tfrac12$,
\begin{align*}
 \leb M_k-\leb I_k &= (2^{k+1}
-2\eta_k-\eta_{k+1})(2\eta_k-\eta_{k+1})
>(2^{k+1}-3\eta_k)\tfrac12\\ &> \sqrt{\leb I_k}.
\end{align*}

Since $\Pi(I_k)$ is  $\text{Poisson}(\leb I_k)$ and $\leb
I_k\to\infty$, we obtain $\P(B_k \text{ is good})\geq c$
for all large enough $k$ for some constant $c$ (in fact,
any $c<\P(0<Z<1)$ is enough, where $Z$ is a standard
Gaussian).

If the events $\{B_k \text{ is good}\}$ were independent
for different $k$, this would be enough; however, we need
to control the dependence. To do this, consider any
sequence $k_1<k_2<\dots$ with the following properties:
\begin{itemize}
\item[(i)] $\P\Bigl(\Pi\left(I_{k_n}\right) > \frac12
    \sqrt{\leb I_{k_{n+1}}}\Bigr) < 2^{-n}$;
\item[(ii)] $\P\Bigl( \leb I_{k_{n+1}} <
    \Pi\left(I_{k_{n+1}}\setminus I_{k_n}\right) < \leb
    I_{k_{n+1}} + \frac12 \sqrt{\leb I_{k_{n+1}}} \Bigr)
    > c'$ \\ (where $c'$ is a constant independent of $k$).
\end{itemize}
By similar arguments to the above, this is easily shown to
be possible by making $k_n$ grow quickly enough. The events
in (ii) are independent for different $n$, so by
Borel-Cantelli, with probability 1 infinitely many of them
occur. The sum of the probabilites in (i) is finite, so
with probability 1 only finitely many of them occur. But
for any given $n$, if the event in (i) fails and the event
in (ii) holds then $B_{k_{n+1}}$ is good. So with
probability 1, there are infinitely many $k$ for which
$B_k$ is good, as desired.

Finally, we turn to the diameter bound. At step $k$, the
area allocated within a $k$-block $B_k$ is at least
$\min\left(\Pi(I_k)-1, \leb I_k-2\right)$. Arguing as for
the non-touching allocation in part (a), we obtain
\[
\P(D>2^k\sqrt{2}) \leq 1-4^{-k} \E \min\left( \Pi(I_k)-1, \leb I_k-2\right).
\]
where $D$ is the diameter of the cell containing the
origin, in the allocation obtained by translating $\Phi$ by
a random element of $[0,1)^2$. As before, this is easily
seen to be at most $C\sqrt{4^k}$ for some $C<\infty$,
giving the desired bound on the tail of $D$.
\end{proof}

\bibliographystyle{abbrv}
\bibliography{alloc}

\end{document}